\documentclass[11pt,reqno]{amsart}

\usepackage{amsfonts, amsmath, amssymb, amsthm}
\usepackage{anysize}
\usepackage{mathtools}
\marginsize{2cm}{2cm}{1.5cm}{1.5cm}

\usepackage[utf8]{inputenc}
\usepackage[english]{babel}
\usepackage{cmap}
\usepackage{enumerate}

\usepackage{xcolor}
\usepackage{hyperref}
\hypersetup{
	colorlinks   = true, 
	urlcolor     = blue, 
	linkcolor    = blue, 
	citecolor    = red 
}
\newcommand{\Href}[2]{\hyperref[#2]{#1~\ref{#2}}}
\usepackage{anysize}
\marginsize{2cm}{2cm}{1.5cm}{1.5cm}

\usepackage{cancel}

\theoremstyle{theorem}
\newtheorem{thm}{Theorem}[section]
\newtheorem{prp}{Proposition}[section]
\newtheorem{lem}{Lemma}[section]
\newtheorem{cor}{Corollary}[section]
\newtheorem{conj}{Conjecture}[section]

\theoremstyle{definition}
\newtheorem{dfn}{Definition}[section]
\newtheorem{rem}[dfn]{Remark}

\newtheorem{prb}{Problem}

\newcommand{\norm}[1]{\left\|#1\right\|}

\newcommand{\EE}{{\mathbb E}}

\DeclareMathOperator{\tr}{\mathrm{trace}}

\providecommand{\parenth}[1]{\left(#1\right)}%
\providecommand{\braces}[1]{\left\{#1\right\}}%
\def\R{{\mathbb R}}%

\def\phi{\varphi}
\def\epsilon{\varepsilon}

\newcommand{\id}{\mathrm{Id}}

\newcommand{\e}{\varepsilon}

\newcommand{\di}{\,\mathrm{d}}

\DeclarePairedDelimiter{\floor}{\lfloor}{\rfloor}

\newcommand{\Mone}{M_1}
\newcommand{\Mtwo}{M_2}
\newcommand{\pot}[2]{F_{#1}\!\left(#2\right)}

\title{Greedy sparsifications of sums of positive semidefinite matrices}
\author{Grigory Ivanov}
\address{Grigory Ivanov: Pontifícia Universidade Cat'olica do Rio de Janeiro \
Departamento de Matematica, \
Rua Marquês de São Vicente, 225 \
Edif{'i}cio Cardeal Leme, sala 862, \
22451-900 G{'a}vea, Rio de Janeiro, Brazil}
\email{grimivanov@gmail.com}
\thanks{The author is supported by Projeto Paz and Coordenacao de Aperfeicoamento de Pessoal de Nivel Superior - Brasil (CAPES) - 23038.015548/2016-06.}
\subjclass[2020]{15A45, 47A58, 52A23, 46B20}
\keywords{positive semidefinite matrices, sparsification, greedy algorithm, matrix concentration, Schatten norms}
\date{}

\begin{document}

\begin{abstract}
We prove a deterministic analogue of Rudelson's sampling theorem for sums of positive semidefinite matrices. Let \(A_1,\dots,A_m\) be positive semidefinite \(d\times d\) matrices, and let \(\lambda_1,\dots,\lambda_m\ge 0\) satisfy
\[
\sum_{i \in [m]} \lambda_i=1,
\qquad
\sum_{i \in [m]} \lambda_i A_i=\id_d,
\qquad
\norm{A_i}\le M
\quad\text{for all }i\in \braces{1, \dots, m}.
\]
We show that there exists a deterministic sequence of indices \(i_1,i_2,\dots\in \braces{1, \dots, m}\) such that for every integer \(k\ge 1\),
\[
\norm{
\frac1k\sum_{r \in [k]} A_{i_r}-\id_d
}
\le
\begin{cases}
2\,\dfrac{M\ln(2d)}{k}, & \text{if } k\le M\ln(2d),\\[2ex]
3\,\sqrt{\dfrac{M\ln(2d)}{k}}, & \text{if } k> M\ln(2d).
\end{cases}
\]
In particular, if \(0<\epsilon\le 1\) and \(N\ge 9M\ln(2d)\epsilon^{-2}\), then one can choose \(i_1,\dots,i_N\in \braces{1, \dots, m} \) so that
\[
\norm{
\frac1N\sum_{r \in [N]} A_{i_r}-\id_d
}
\le \epsilon.
\]
The construction is greedy and deterministic, and it yields the correct logarithmic order simultaneously for every prefix of the sequence.
\end{abstract}

\maketitle

\section{Introduction}

Sparsification problems play a central role in modern mathematics and its applications. Broadly speaking, one starts with a structured representation of an object as a large sum and asks whether it can be replaced by a much shorter sum while preserving the relevant analytic, spectral, or geometric information. This point of view appears in spectral graph theory, numerical linear algebra, randomized algorithms, approximation theory, and asymptotic geometric analysis; see, for instance, \cite{tropp2015introduction, vershynin2018high, artstein2021asymptotic}.

In this paper we study a basic matrix sparsification problem. Let
\(
A_1,\dots,A_m 
\)
be positive semidefinite \(d\times d\) matrices, and let
\[
\lambda_1,\dots,\lambda_m\ge 0,
\qquad
\sum_{i\in[m]} \lambda_i =1,
\qquad
\sum_{i\in[m]} \lambda_i A_i=\id_d,
\]
where, and throughout the paper, $[m]$ denotes the set $\{1, \dots, m\}$ for a positive integer $m$.
The goal is to approximate the identity operator by a short sum of the matrices \(A_i\).

There are two natural versions of this problem.

In the \emph{weighted} version, one is allowed to choose arbitrary non-negative coefficients and seeks a sparse vector
\(
s=(s_1,\dots,s_m)\in\R_+^m
\)
such that
\[
\norm{\sum_{i\in[m]} s_i A_i - \id_d} \leq \e.
\]
Here and throughout, \(\norm{\cdot}\) denotes the operator norm.
In the \emph{equal-weight} version, one asks for an approximation by an average of the form
\[
\frac1N\sum_{r\in[N]} A_{i_r},
\qquad i_1,\dots,i_N\in[m].
\]
These two problems are closely related, but they are not equivalent. In particular, the equal-weight setting is substantially more restrictive.

The weighted PSD sparsification problem is now fairly well understood. In the rank-one case, the fundamental theorem of Batson, Spielman, and Srivastava \cite{BSS14} yields optimal deterministic weighted sparsifiers of linear size and, in particular, linear-size spectral sparsifiers of graphs. This theory was extended to arbitrary-rank positive semidefinite matrices by de Carli Silva, Harvey, and Sato \cite{de2016sparse}; see also \cite{reis2020linear} for a probabilistic version. Thus, in the weighted setting, one essentially has linear-size sparsification in full generality.

The equal-weight problem is more subtle. A classical theorem of Rudelson \cite{rudelson1999random} shows that in the rank-one case one can obtain an \(\epsilon\)-approximation by taking
\[
N\le C\frac{d\ln d}{\epsilon^2}
\]
terms. Rudelson's method is probabilistic: it estimates the expected deviation of the empirical average and controls it via a non-commutative Khintchine-type inequality \cite{Lu86, LuP91}. Oliveira \cite{Oli2010} recast this argument in a more general setting: the same probabilistic scheme yields bounds of the form \(N\le C\frac{M \ln d}{\e^2}\) for arbitrary-rank positive semidefinite matrices as well, where \(M\) is the maximum of the norms of the original matrices.

However, the unrestricted-rank equal-weight problem behaves quite differently from its weighted counterpart. In \cite[Theorem 1.2]{IVANOV2020108684} it was shown that, for arbitrary-rank positive semidefinite matrices, the logarithmic factor is in general unavoidable. Thus the linear-size phenomenon cannot be extended to the general equal-weight model. On the other hand, in the rank-one case the situation is much better: deep results originating from the solution of the Kadison--Singer problem \cite{marcus2015interlacing} and, in particular, the work of Friedland and Youssef \cite{FY17}, show that linear-size equal-weight approximations are possible there. This leaves a striking gap between the rank-one and unrestricted-rank settings.

The main result of the present paper is a deterministic equal-weight sparsification theorem in the unrestricted-rank PSD setting. More precisely, we construct a single greedy sequence of indices such that every prefix already has the correct order of approximation. Thus the theorem simultaneously produces an average with exactly \(k\) terms for every prescribed \(k\ge 1\).
We use $\succeq$ to denote the standard L\"owner order on symmetric matrices.

\begin{thm}\label{thm:all-steps_sparsification}
Let \(A_1,\dots,A_m\succeq 0\) be \(d\times d\) matrices satisfying
\begin{equation}
\label{eq:original_sum_matrices}
\sum_{i \in [m]} \lambda_i A_i=\id_d,
\qquad
\sum_{i \in [m]} \lambda_i=1,
\qquad \lambda_i \ge 0 \ \text{ and } \
\norm{A_i}\le M
\quad\text{for all }i\in[m].
\end{equation}
Then there exists a deterministic sequence of indices
\(
i_1,i_2,\dots \in [m]
\)
such that for every integer \(k\ge 1\) one has
\[
\norm{
\frac1k\sum_{r \in [k]} A_{i_r}-\id_d
}
\le
\begin{cases}
2\,\dfrac{M\ln(2d)}{k}, & \text{if } k\le M\ln(2d),\\[2ex]
3\,\sqrt{\dfrac{M\ln(2d)}{k}}, & \text{if } k> M\ln(2d).
\end{cases}
\]
\end{thm}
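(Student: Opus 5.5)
We derandomize the standard matrix-exponential (Chernoff/Ahlswede--Winter) argument by a pessimistic estimator, in the style of Wigderson--Xiao and Young. Write $S_k=\sum_{r\in[k]}(A_{i_r}-\id_d)$, so the deviation is $\norm{S_k}/k$, and note that each $X_i:=A_i-\id_d$ satisfies $-\id_d\preceq X_i\preceq (M-1)\id_d$ and $\sum_i\lambda_i X_i=0$. For a parameter $t>0$ we use the two-sided potential
\[
\Phi_t(S)=\tr e^{tS}+\tr e^{-tS},
\]
which satisfies $e^{t\norm{S}}\le \Phi_t(S)$ and $\Phi_t(0)=2d$. Hence, if we can keep $\Phi_t(S_k)\le 2d\,e^{k g(t)}$ for a suitable function $g$, we obtain
\[
\norm{S_k}\le \frac{\ln(2d)}{t}+\frac{k g(t)}{t}.
\]
Optimizing this over $t$ gives the two regimes of the theorem.

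\textbf{One-step averaging.} By Golden--Thompson,
\[
\tr e^{t(S+X_i)}\le \tr\bigl(e^{tS}e^{tX_i}\bigr).
\]
Averaging over $i$ with weights $\lambda_i$ gives
\[
\sum_i\lambda_i\tr e^{t(S+X_i)}\le \tr\Bigl(e^{tS}\sum_i\lambda_i e^{tX_i}\Bigr)\le \tr e^{tS}\,\Bigl\|\sum_i\lambda_ie^{tX_i}\Bigr\|.
\]
We bound the last norm using convexity of the scalar exponential on the spectrum. Since $X_i=A_i-\id_d$ with $0\preceq A_i\preceq M\id_d$, we have
\[
e^{tA_i}\preceq \id_d+\frac{e^{tM}-1}{M}A_i,
\]
so $\sum_i\lambda_i e^{tA_i}\preceq \bigl(1+\frac{e^{tM}-1}{M}\bigr)\id_d$. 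Therefore
\[
\Bigl\|\sum_i\lambda_ie^{tX_i}\Bigr\|\le e^{-t}\Bigl(1+\frac{e^{tM}-1}{M}\Bigr)=:e^{g_+(t)}.
\]
In the same way, $\sum_i\lambda_i e^{-tA_i}\preceq \bigl(1-\frac{1-e^{-tM}}{M}\bigr)\id_d$, which gives $e^{g_-(t)}$ for the negative side. Hence some index $i$ satisfies
\[
\Phi_t(S+X_i)\le e^{g(t)}\Phi_t(S),\qquad g=\max(g_+,g_-).
\]
The greedy rule is to choose the minimizing index at each step. A short calculus check shows $g(t)\le \frac{M t^2}{2}(1+O(tM))$. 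More usefully, using $e^{x}\le 1+x+x^2$ for $x\le1$, we get $g(t)\le Mt^2$ for $t\le 1/M$, and the cruder bound $g_+(t)\le \ln\frac{e^{tM}}{M}-t$ for large $t$.

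\textbf{Main obstacle: making one sequence good for every prefix.} A single fixed $t$ gives the right bound only near $k\approx M\ln(2d)/\epsilon^2$. To handle all $k$ simultaneously we would let $t=t_k$ depend on the step, with the target
\[
t_k\approx \sqrt{\ln(2d)/(Mk)}\ \text{ for large } k,\qquad t_k\approx 1/M \text{ (or larger) for } k\le M\ln(2d),
\]
and carry an inductive invariant such as $\Phi_{t_k}(S_k)\le 2d\,e^{\psi(k)}$. Changing $t$ from $t_{k}$ to $t_{k+1}\le t_k$ is harmless: for $0<s\le t$, Jensen/Lyapunov gives $\bigl(\tfrac1{2d}\Phi_s\bigr)^{1/s}\le\bigl(\tfrac1{2d}\Phi_t\bigr)^{1/t}$, i.e. the normalized power mean is monotone in $t$. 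So the invariant can be phrased as
\[
\frac1{t_k}\ln\frac{\Phi_{t_k}(S_k)}{2d}\le B_k .
\]
Then $B_{k+1}\le B_k+g(t_{k+1})/t_{k+1}$, and $\norm{S_k}\le \ln(2d)/t_k+B_k$. With $t_k=\sqrt{\ln(2d)/(Mk)}$ we have $\sum_{j\le k} Mt_j\approx 2\sqrt{M k\ln(2d)}$. The total is then about $3\sqrt{Mk\ln(2d)}$, matching the constant $3$.

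In the small-$k$ regime we take $t$ fixed, of order $1/M$ or larger. There we use the crude bound
\[
\norm{S_k}\le \frac{\ln(2d)}{t}+k\,\frac{g(t)}{t}.
\]
With $t=1/M$ and $g\le Mt^2$ this gives $\norm{S_k}\le M\ln(2d)+k\le 2M\ln(2d)$ when $k\le M\ln(2d)$, which is the first case of the theorem. The delicate point is the transition: the $t$-schedule must be nonincreasing, and the accumulated $B_k$ from the first phase must not spoil the constant $3$ afterwards. The first choice to try is $t_k=\min\{1/M,\sqrt{\ln(2d)/(Mk)}\}$, which is continuous at $k=M\ln(2d)$, followed by a direct summation check.
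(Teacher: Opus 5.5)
Your argument is correct, and its skeleton coincides with the paper's. You use the same two-sided potential, the same greedy rule and the same schedule $t_k=\min\{1/M,\sqrt{\ln(2d)/(Mk)}\}$. Your Lyapunov monotonicity of $(\tfrac{1}{2d}\Phi_s)^{1/s}$ is exactly the H\"older interpolation of Lemma~\ref{lem:delta-interpolation}. The bookkeeping is the same recursion in another normalization: your $B_k$ is the paper's $c_k=\ln\pot{\delta_k}{Y_k}-\ln(2d)$ divided by $\delta_k$, so the paper's step $c_k\le a_k+\alpha_k c_{k-1}$ becomes your $B_k\le B_{k-1}+g(t_k)/t_k$. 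You telescope it, while the paper inducts on $c_k\le 2\ln(2d)$.

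The genuine difference is the one-step estimate. You use Golden--Thompson, which the paper mentions as an alternative to Lieb's theorem. You combine it with the chord inequality $e^{tx}\le 1+\frac{e^{tM}-1}{M}x$ on $[0,M]$, applied to the $A_i$ themselves together with $\sum_i\lambda_iA_i=\id_d$. This is the classical matrix-Chernoff route: fully elementary, and its factor $e^{g(t)}$ is never worse than the paper's $e^{M\psi_M(t)}$. The paper instead proves Theorem~\ref{thm:onestep} for arbitrary centered self-adjoint families under $\sum_i\lambda_iX_i=0$, $\norm{X_i}\le\Mone$ and $\sum_i\lambda_iX_i^2\preceq\Mtwo\id_d$. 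Positivity enters there only through Lemma~\ref{lem:centered}, so that averaging principle applies outside the PSD setting, which your chord bound does not.

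Two small points. First, the summation you left as approximate closes exactly. With $L=\ln(2d)$, $p=\lfloor ML\rfloor$ and $k>ML$, one has $\sum_{j\le k}Mt_j\le p+\sqrt{ML}\int_p^k x^{-1/2}\di x=p+2\sqrt{ML}(\sqrt{k}-\sqrt{p})\le 2\sqrt{MLk}$, since $p\le\sqrt{ML}\sqrt{p}$. Hence $\norm{S_k}\le \sqrt{MLk}+2\sqrt{MLk}$, and the first phase costs nothing toward the constant $3$. Second, your side remark $g_+(t)\le\ln\frac{e^{tM}}{M}-t$ is false for $M>1$, because the argument of the logarithm is $\frac{e^{tM}+M-1}{M}$. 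It is harmless, since every $t_k\le 1/M$ and only $g(t)\le Mt^2$ is used.
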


In particular, the theorem gives a deterministic greedy analogue of the logarithmic Rudelson--Oliveira bound in the general PSD setting. At each step one chooses the next matrix greedily, and after any number of steps the current empirical average is already controlled. 

The usual \(\epsilon\)-approximation statement is an immediate corollary.

\begin{cor}\label{cor:epsilon}
Under the assumptions of \Href{Theorem}{thm:all-steps_sparsification}, let \(0<\epsilon\le 1\). If
\[
N\ge 9\,\frac{M\ln(2d)}{\epsilon^2},
\]
then there exists a sequence of indices
\[
i_1,\dots,i_N\in[m]
\]
such that
\[
\norm{
\frac{1}{N}\sum_{r \in [N]} A_{i_r}-\id_d
}
\le \epsilon.
\]
\end{cor}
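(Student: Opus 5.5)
The plan is to apply \Href{Theorem}{thm:all-steps_sparsification} and keep only the first \(N\) terms of the sequence it produces. Let \(i_1,i_2,\dots\in[m]\) be the deterministic sequence given by the theorem. We take the prefix \(i_1,\dots,i_N\) and use the bound of the theorem with \(k=N\). The only real task is to check which of the two regimes \(k=N\) falls into.

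First we note that the quantity \(M\ln(2d)\) is strictly positive. Since \(d\ge 1\), we have \(\ln(2d)\ge\ln 2>0\). Also \(M\ge 1\), because
\[
1=\norm{\id_d}=\norm{\sum_{i\in[m]}\lambda_iA_i}\le\sum_{i\in[m]}\lambda_i\norm{A_i}\le M,
\]
using \(\sum_{i\in[m]}\lambda_i=1\). Since \(0<\epsilon\le 1\), this gives
\[
N\ge 9\,\frac{M\ln(2d)}{\epsilon^2}\ge 9M\ln(2d)>M\ln(2d),
\]
so \(k=N\) lies in the second regime of the theorem.

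The theorem therefore yields
\[
\norm{\frac1N\sum_{r\in[N]}A_{i_r}-\id_d}\le 3\sqrt{\frac{M\ln(2d)}{N}}\le 3\sqrt{\frac{\epsilon^2}{9}}=\epsilon,
\]
where the second inequality uses \(M\ln(2d)/N\le\epsilon^2/9\), which is the hypothesis on \(N\) rearranged. There is no genuine obstacle here. The only point needing care is confirming the strict inequality \(N>M\ln(2d)\) that selects the square-root regime, and this is where we use \(\epsilon\le1\) together with the positivity of \(M\ln(2d)\).
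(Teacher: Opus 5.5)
Your proposal is correct and matches the paper's approach: the paper calls this an immediate corollary of Theorem~\ref{thm:all-steps_sparsification}, and your argument is exactly that. You take the prefix of length \(N\), use \(M\ge 1\) and \(\epsilon\le 1\) to place \(k=N\) in the square-root regime, and then apply \(3\sqrt{M\ln(2d)/N}\le\epsilon\).
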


The proof is organized in two stages. We first establish a fixed-\(N\) version by running the greedy procedure with a single value of the exponential parameter \(\delta\), and then upgrade it to the all-step statement by allowing \(\delta\) to decrease with the step number. The key input in both arguments is a deterministic one-step averaging principle driven by a symmetric exponential potential. Roughly speaking, this principle asserts that, under suitable first- and second-moment assumptions, one can choose the next summand greedily so that the potential grows at most multiplicatively at each step. In the present setting, the relevant input is not only the first-moment identity
\[
\sum_{i\in[m]} \lambda_i X_i=0,
\qquad
X_i:=A_i-\id_d,
\]
but also the quadratic bound
\[
\sum_{i\in[m]} \lambda_i X_i^2\preceq M\id_d.
\]
This second-order input is exactly what is needed for the original PSD sparsification problem. At the same time, the abstract averaging theorem proved below is formulated for self-adjoint centered matrices and no longer requires positivity of the original matrices. This auxiliary result may therefore be of independent interest.

Returning to the gap between the rank-one and unrestricted-rank problems, it is natural to ask what happens for fixed rank \(r\ge 2\).

\begin{conj}\label{conj:fixed-rank}
For every integer \(r\ge 2\) and every \(\epsilon>0\), there exists a constant \(C_{r}\) such that the following holds. Let
\(
P_1,\dots,P_m
\)
be orthogonal projections of rank \(r\) in \(\R^d\), and assume that
\[
\sum_{i\in[m]} \lambda_i P_i=\frac{r}{d}\id_d,
\qquad
\sum_{i\in[m]} \lambda_i=1, \quad
 \lambda_i \ge 0 \
\quad\text{for all }i\in[m].
\]
Then there exist indices \(i_1,\dots,i_N\in[m]\) with
\[
N\le C_{r} \frac{d}{\e^2}
\]
such that
\[
\norm{
\frac{d}{rN}\sum_{j\in[N]} P_{i_j}-\id_d
}
\le \epsilon.
\]
\end{conj}

We also mention a related question suggested by the discretization viewpoint of Temlyakov and his collaborators. In the classical theory of Marcinkiewicz-type discretization, equal-weight formulas are among the strongest and most restrictive forms of discretization, and continuous-support versions are often substantially harder than their finite-dimensional weighted analogues \cite{dai2019integral,kashin2022sampling}.  We do not address this problem here, but the analogy seems worth emphasizing.

\medskip

The coarse regime
\[
\left\|
\frac1N\sum_{r\in[N]} A_{i_r}-\id_d
\right\|
\le
2\,\frac{M\ln(2d)}{N}
\qquad\text{for }N\le M\ln(2d)
\]
also seems interesting in its own right. Although weaker than the \(\epsilon\)-approximation regime, it is well suited to geometric applications in which one seeks a reasonably well-conditioned average of a prescribed small number of operators. This is closely related in spirit to quantitative Helly- and Carath\'eodory-type phenomena; see, for example, \cite{naszodi2016proof, ivanov2024steinitz}.

We conclude with a related geometric problem, which may be viewed as a spectral analogue of the celebrated Dvoretzky--Rogers lemma \cite{dvoretzky1950absolute}.

\begin{prb}
Let
\[
\sum_{i \in [m]} c_i\,u_i\otimes u_i=\id_d
\]
be a John's decomposition of the identity, where \(u_1, \dots, u_m\) are unit vectors and \(c_1, \dots, c_m\) are positive reals. Does there always exist a choice of \(d\) contact points
\(
u_{i_1},\dots,u_{i_d}
\)
such that
\[
\norm{\id_d-\frac{1}{d} 
\sum_{r \in [d]} u_{i_r}\otimes u_{i_r}}
\le 1-\frac{1}{d^2}?
\]
\end{prb}

The natural scale of the problem would be \(1-c/d\), but even the weaker bound above already seems to be an interesting open question. For related, more geometric questions see, for example,
\cite[Conjecture 1.5]{almendra2022quantitative} and the suggested functionals in \cite{ivanov2022quantitative}.

\medskip

The paper is organized as follows. In \Href{Section}{sec:potential} we introduce the symmetric exponential potential and formulate the abstract one-step averaging theorem. We also explain why this potential is natural in the present problem.
Then, in \Href{Section}{sec:proof-main} we first prove a fixed-\(N\) version of the sparsification theorem and then upgrade it to the stronger all-step statement, namely \Href{Theorem}{thm:all-steps_sparsification}.
Finally, in \Href{Section}{sec:ave_thm} we prove the averaging theorem.

\section{The potential and the averaging principle}
\label{sec:potential}

Let \(X_1,\dots,X_m\) be self-adjoint \(d\times d\) matrices, and let
\[
\lambda_1,\dots,\lambda_m\ge 0,
\qquad
\sum_{i \in [m]} \lambda_i=1.
\]
Assume that
\begin{equation}\label{eq:mean-zero}
\sum_{i \in [m]} \lambda_i X_i=0,
\end{equation}
\begin{equation}\label{eq:norm-bound}
\norm{X_i}\le \Mone
\qquad\text{for all }i\in[m],
\end{equation}
and
\begin{equation}\label{eq:square-bound}
\sum_{i \in [m]} \lambda_i X_i^2\preceq \Mtwo\id_d.
\end{equation}

Fix \(\delta>0\). For a self-adjoint matrix \(Y\), we define the symmetric exponential potential by
\[
\pot{\delta}{Y}:=\tr e^{\delta Y}+\tr e^{-\delta Y}.
\]
We postpone a more detailed discussion of the origin and the role of this potential to the next subsection. At this stage, we only emphasize that it is well suited to a one-step averaging argument.

We also define
\begin{equation}\label{eq:psi}
\psi_{\Mone}(\delta):=\frac{e^{\delta \Mone}-1-\delta \Mone}{\Mone^2},
\end{equation}
which will appear explicitly as the second-order term of the Taylor expansion
(see also \cite[Chapter 7]{tropp2015introduction}).

The following theorem is the general form of our averaging technique.

\begin{thm}[One-step averaging]\label{thm:onestep}
Fix \(\delta>0\), and let \(Y\) be any self-adjoint matrix. Under assumptions \eqref{eq:mean-zero}, \eqref{eq:norm-bound}, and \eqref{eq:square-bound}, one has
\begin{equation}\label{eq:one-step-average}
\sum_{i \in [m]} \lambda_i \pot{\delta}{Y+X_i}
\le
e^{\Mtwo\psi_{\Mone}(\delta)}\pot{\delta}{Y}.
\end{equation}
Consequently, there exists an index \(i_*\in[m]\) such that
\begin{equation}\label{eq:one-step-choice}
\pot{\delta}{Y+X_{i_*}}
\le
e^{\Mtwo\psi_{\Mone}(\delta)}\pot{\delta}{Y}.
\end{equation}
\end{thm}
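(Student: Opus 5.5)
The plan is to handle the two halves of the potential separately. For each half we apply the Golden--Thompson inequality to separate $Y$ from $X_i$, then bound the average of the matrix exponentials $e^{\pm\delta X_i}$ in the Löwner order using a scalar Taylor-type inequality and the moment assumptions \eqref{eq:mean-zero} and \eqref{eq:square-bound}.

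\emph{Step 1 (scalar inequality).} Set $g(u):=(e^u-1-u)/u^2$, with $g(0)=1/2$. This function is nondecreasing on $\R$; one way to see it is $g(u)=\int_0^1 (1-s)e^{su}\di s$. Hence for every real $x$ with $|x|\le \Mone$ we have $\delta x\le \delta \Mone$, so $g(\delta x)\le g(\delta\Mone)$, which gives
\[
e^{\delta x}\le 1+\delta x+\delta^2 x^2 g(\delta \Mone)=1+\delta x+\psi_{\Mone}(\delta)\,x^2 .
\]
Since $\norm{X_i}\le\Mone$ by \eqref{eq:norm-bound}, the spectrum of $X_i$ lies in $[-\Mone,\Mone]$. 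Applying the spectral theorem to $X_i$ transfers the inequality to the matrix inequality $e^{\delta X_i}\preceq \id_d+\delta X_i+\psi_{\Mone}(\delta)X_i^2$. The same holds with $X_i$ replaced by $-X_i$, which satisfies the same norm bound and has the same square.

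\emph{Step 2 (averaging the exponentials).} Multiply by $\lambda_i$ and sum. Using $\sum_i\lambda_i=1$, \eqref{eq:mean-zero}, and \eqref{eq:square-bound}, we get
\[
\sum_{i\in[m]}\lambda_i e^{\pm\delta X_i}\preceq \bigl(1+\Mtwo\psi_{\Mone}(\delta)\bigr)\id_d\preceq e^{\Mtwo\psi_{\Mone}(\delta)}\id_d .
\]

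\emph{Step 3 (decoupling via Golden--Thompson).} Golden--Thompson gives $\tr e^{\delta Y+\delta X_i}\le \tr\bigl(e^{\delta Y}e^{\delta X_i}\bigr)$, and likewise $\tr e^{-\delta Y-\delta X_i}\le \tr\bigl(e^{-\delta Y}e^{-\delta X_i}\bigr)$. Averaging over $i$ and using linearity of the trace, we get
\[
\sum_{i\in[m]}\lambda_i\tr e^{\pm\delta(Y+X_i)}\le \tr\Bigl(e^{\pm\delta Y}\,B_\pm\Bigr),\qquad B_\pm:=\sum_{i\in[m]}\lambda_i e^{\pm\delta X_i}.
\]
For $P\succeq 0$ and a self-adjoint $B\preceq c\,\id_d$, we have $\tr(PB)=\tr(P^{1/2}BP^{1/2})\le c\tr P$. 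Applying this with $P=e^{\pm\delta Y}$ and $c=e^{\Mtwo\psi_{\Mone}(\delta)}$ from Step 2 bounds each half by $e^{\Mtwo\psi_{\Mone}(\delta)}\tr e^{\pm\delta Y}$. Adding the two halves gives \eqref{eq:one-step-average}.

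\emph{Step 4 (choice of index).} The left-hand side of \eqref{eq:one-step-average} is a convex combination of the numbers $\pot{\delta}{Y+X_i}$, because $\lambda_i\ge0$ and $\sum_i\lambda_i=1$. Hence some $i_*$ with $\lambda_{i_*}>0$ satisfies \eqref{eq:one-step-choice}.

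The only genuinely noncommutative point, and the main obstacle, is Step 3: $Y$ and $X_i$ need not commute, so $e^{\delta(Y+X_i)}$ does not factor. Golden--Thompson is exactly what handles this, since it is valid for any two Hermitian matrices. It is also essential that the resulting operator $B_\pm$ is averaged before it is estimated, so that the mean-zero condition \eqref{eq:mean-zero} kills the first-order term. Everything else is routine spectral calculus.
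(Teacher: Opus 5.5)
Your proof is correct, and its skeleton matches the paper's. It uses the same scalar Taylor bound (\Href{Lemma}{lem:scalar}) and the same transfer to $\sum_{i}\lambda_i e^{\pm\delta X_i}\preceq e^{\Mtwo\psi_{\Mone}(\delta)}\id_d$ (\Href{Lemma}{lem:mgf}). It also treats the two halves of $\pot{\delta}{\cdot}$ separately and chooses $i_*$ by the same convex-combination argument.

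The one real difference is how you decouple $Y$ from $X_i$. The paper uses Lieb's concavity theorem, i.e.\ Jensen's inequality for $A\mapsto\tr\exp(\delta Y+\log A)$, to get $\sum_i\lambda_i\tr e^{\delta(Y+X_i)}\le\tr\exp\bigl(\delta Y+\log\sum_i\lambda_i e^{\delta X_i}\bigr)$. It then finishes with operator monotonicity of $\log$ and monotonicity of $\tr\exp$. You instead apply Golden--Thompson termwise and use linearity of $B\mapsto\tr(e^{\delta Y}B)$ to pass the average inside. You conclude with $\tr(PB)\le c\tr P$, which is precisely \Href{Proposition}{prp:trace-comparison}; the paper states this but never invokes it in its Lieb-based proof. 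The paper itself mentions your route in the Remark following \Href{Proposition}{prp:Lieb}.

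Your argument is more elementary and loses nothing here. $Y$ is frozen, only one summand is decoupled per step, and the averaged exponential is dominated by a scalar multiple of $\id_d$, so both routes give exactly the factor $e^{\Mtwo\psi_{\Mone}(\delta)}$. What Lieb's theorem buys is the sharper intermediate quantity $\tr\exp\bigl(\delta Y+\log\sum_i\lambda_i e^{\delta X_i}\bigr)$, which remains an exponential of a sum. That matters when many independent summands must be decoupled at once, as in the probabilistic matrix-Bernstein setting. It also matters when \eqref{eq:square-bound} is replaced by a non-scalar variance proxy, since Golden--Thompson does not extend to three or more factors. Neither situation arises in this theorem.
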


Our greedy algorithm for the main problem is to choose, at each step, the next centered matrix, that is, the next matrix of the form \(A_i-\id_d\), so as to minimize the potential. Thus, \Href{Theorem}{thm:onestep} yields a recursive upper bound on the value of the potential at the final step. On the other hand, the potential admits a simple lower bound in terms of the operator norm, which allows us to convert an upper bound for the potential into an upper bound for the approximation error.

\begin{lem}\label{lem:potential-controls}
For every self-adjoint matrix \(Y\),
\begin{equation}\label{eq:potential-controls-norm}
e^{\delta \norm{Y}}\le \pot{\delta}{Y}.
\end{equation}
\end{lem}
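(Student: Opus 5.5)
We diagonalize \(Y\) and compare the potential with its largest term. Since \(Y\) is self-adjoint, we write \(Y=\sum_{j\in[d]}\mu_j\, v_j\otimes v_j\), where \(\mu_1,\dots,\mu_d\) are real eigenvalues and \(v_1,\dots,v_d\) is an orthonormal eigenbasis. By the spectral calculus, \(e^{\pm\delta Y}\) has the same eigenvectors with eigenvalues \(e^{\pm\delta\mu_j}\). Hence
\[
\pot{\delta}{Y}=\sum_{j\in[d]}\left(e^{\delta\mu_j}+e^{-\delta\mu_j}\right),
\]
and every summand here is strictly positive.

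Next we identify the norm with an extreme eigenvalue. Because \(Y\) is self-adjoint, \(\norm{Y}=\max_j|\mu_j|\), so some index \(j_0\) satisfies \(|\mu_{j_0}|=\norm{Y}\). Then one of the two numbers \(e^{\delta\mu_{j_0}}\) and \(e^{-\delta\mu_{j_0}}\) equals \(e^{\delta|\mu_{j_0}|}=e^{\delta\norm{Y}}\); here we use \(\delta>0\). Dropping all the other nonnegative terms from the sum gives \(e^{\delta\norm{Y}}\le\pot{\delta}{Y}\), which is \eqref{eq:potential-controls-norm}.

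There is no real obstacle in this argument. The only point needing care is that the symmetric form of the potential is exactly what handles the sign: the term \(\tr e^{\delta Y}\) controls a large positive eigenvalue, and the term \(\tr e^{-\delta Y}\) controls a large negative eigenvalue. This is why both terms appear in \(\pot{\delta}{\cdot}\), and it is what later allows the bound to control the operator norm of \(\frac1k\sum A_{i_r}-\id_d\) on both sides.
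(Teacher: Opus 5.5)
Your proof is correct and is essentially the paper's argument. The paper splits into the cases \(\lambda_{\max}(Y)=\norm{Y}\) or \(\lambda_{\min}(Y)=-\norm{Y}\) and keeps the largest term of \(\tr e^{\delta Y}\) or \(\tr e^{-\delta Y}\) respectively, which is exactly your choice of an eigenvalue \(\mu_{j_0}\) with \(|\mu_{j_0}|=\norm{Y}\) followed by dropping the other positive terms.
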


\begin{proof}
Since \(Y\) is self-adjoint, either \(\lambda_{\max}(Y)=\norm{Y}\) or \(\lambda_{\min}(Y)=-\norm{Y}\).

If \(\lambda_{\max}(Y)=\norm{Y}\), then
\[
\tr e^{\delta Y}
=
\sum_{j \in [d]} e^{\delta \lambda_j(Y)}
\ge
e^{\delta \lambda_{\max}(Y)}
=
e^{\delta \norm{Y}}.
\]

If \(\lambda_{\min}(Y)=-\norm{Y}\), then
\[
\tr e^{-\delta Y}
=
\sum_{j \in [d]} e^{-\delta \lambda_j(Y)}
\ge
e^{-\delta \lambda_{\min}(Y)}
=
e^{\delta \norm{Y}}.
\]

In either case,
\[
e^{\delta \norm{Y}}
\le
\tr e^{\delta Y}+\tr e^{-\delta Y}
=
\pot{\delta}{Y}.
\]
\end{proof}
\subsection{Motivation for the potential}

We now explain why the potential introduced above is natural for the present problem.

A natural first attempt is to view the equal-weight PSD sparsification problem as a no-dimensional Carath\'eodory-type problem. Recall that such a theorem asks for the following: if \(0\) belongs to  the convex hull of $Q$, where \(Q\) is contained in the unit ball of a Banach space, can one choose \(k\) points of \(Q\) whose average has small norm, with a bound depending on \(k\) but not on the ambient dimension? The classical probabilistic form of this principle is Maurey's lemma \cite{pisier1980remarques}, and in uniformly smooth Banach spaces one also has greedy deterministic versions \cite{ivanov2021approximate}.

In our setting, after centering
\[
X_i:=A_i-\id_d,
\]
we seek a short average of the matrices \(X_i\) under the condition
\[
\sum_{i\in[m]} \lambda_i X_i=0.
\]
This is exactly the form of an approximate Carath\'eodory problem. The difficulty is that the natural norm here is the operator norm, whereas no-dimensional Carath\'eodory theorems are much more effective in uniformly smooth spaces. A standard way around this difficulty is to replace the operator norm by a nearby Schatten norm. Recall that for \(1\le p<\infty\), the Schatten norm \(\norm{B}_{S_p}\) is the \(\ell_p\)-norm of the singular values of \(B\). Thus the spaces \(S_p\) play the role of matrix-valued \(L_p\)-spaces.

However, if one applies this philosophy directly in the present setting, one gets the wrong scale. To see the obstruction, consider the rank-one isotropic case
\[
A_i=d\,u_i\otimes u_i,
\qquad
\sum_{i\in[m]} \lambda_i A_i=\id_d.
\]
Then
\[
X_i=d\,u_i\otimes u_i-\id_d.
\]
The standard substitution trick \cite{ivanov2026nodimensionalresults} is to pass from the operator norm to a nearby Schatten norm and then return to the operator norm at the end. In our situation, one takes
\[
S_\infty \longrightarrow S_p,
\qquad
p=\ln d.
\]
This is also the substitution used by Rudelson \cite{rudelson1999random}. In the rank-one case one has the exact identity
\[
\norm{X_i}=d-1.
\]
Accordingly, the direct no-dimensional Carath\'eodory approach in \(S_p\) leads to a bound of the form
\[
\norm{\frac1k\sum_{r\in[k]} X_{i_r}}_{S_p}
\le
C d\sqrt{\frac{p}{k}},
\]
and therefore also
\[
\norm{\frac1k\sum_{r\in[k]} X_{i_r}}_{S_\infty}
\le
C d\sqrt{\frac{p}{k}}.
\]
So in order to obtain a constant error one needs at least on the order of \(d^2p\) terms. For \(p=\ln d\), this is still quadratic in \(d\), up to the logarithmic factor, and is therefore far from the logarithmic Rudelson scale. Thus the no-dimensional Carath\'eodory viewpoint by itself is not strong enough for the PSD sparsification problem.

So one has to use additional structure. This additional structure is already visible in Rudelson's probabilistic argument. Let \(Q_1,\dots,Q_k\) be independent random matrices taking the values \(A_1,\dots,A_m\) with probabilities \(\lambda_1,\dots,\lambda_m\), and set
\[
D:=\frac1k\sum_{\ell\in[k]} Q_\ell-\id_d.
\]
Choose \(p\ge 2\). Very schematically, by passing from the operator norm to the Schatten \(p\)-norm, then symmetrizing, and finally applying the Lust-Piquard inequality \cite{LuP91}, one arrives at
\begin{align*}
\EE \norm{D}
&\le
\EE \norm{D}_{S_p}\le
\frac{2}{k}\,
\EE_{Q_1,\dots,Q_k}\EE_{r}
\left\|
\sum_{\ell\in[k]} r_\ell Q_\ell
\right\|_{S_p}\\
&\le
\frac{c_0\sqrt{p}}{k}\,
\EE_{Q_1,\dots,Q_k}
\left\|
\left(\sum_{\ell\in[k]} Q_\ell^2\right)^{1/2}
\right\|_{S_p},
\end{align*}
where \(r_1,\dots,r_k\) are independent Rademacher variables.

Up to this point there is essentially no difference between probabilistic proofs of no-dimensional Carath\'eodory theorems and Rudelson's argument. The decisive extra input comes from the cone of positive semidefinite matrices. Since the matrices \(Q_\ell\) are positive semidefinite and satisfy \(\norm{Q_\ell}\le M\), one has
\[
Q_\ell^2\preceq \norm{Q_\ell}\,Q_\ell\preceq M Q_\ell
\qquad\text{for all }\ell\in[k].
\]
Hence
\[
\sum_{\ell\in[k]} Q_\ell^2\preceq M\sum_{\ell\in[k]} Q_\ell,
\]
and therefore
\[
\left(\sum_{\ell\in[k]} Q_\ell^2\right)^{1/2}
\preceq
\sqrt{M}\left(\sum_{\ell\in[k]} Q_\ell\right)^{1/2}.
\]
This is exactly the place where one factors out a square and gains one power of \(M\). From this point on, the rest of the argument is elementary arithmetic, and, after taking \(p=\ln d\), one obtains a logarithmic bound.

This computation indicates what a deterministic proof should preserve. The first condition
\[
\sum_{i\in[m]} \lambda_i X_i=0
\]
is the usual centering condition from the no-dimensional Carath\'eodory approach. But in the PSD problem there is also a second, less visible piece of structure, namely the quadratic estimate
\[
\sum_{i\in[m]} \lambda_i X_i^2\preceq M\id_d,
\]
proved below in \Href{Lemma}{lem:centered}. In other words, the problem is governed not only by the size of the increments \(X_i\), but also by an averaged bound for their squares.

Once this is recognized, the choice of the potential becomes much less mysterious. One wants a one-step inequality for a running error matrix \(Y\) in which the linear term disappears because of the identity
\[
\sum_{i\in[m]} \lambda_i X_i=0,
\]
while the quadratic term is controlled by
\[
\sum_{i\in[m]} \lambda_i X_i^2.
\]
The exponential function is the natural candidate for several related reasons. First, exponentials convert additive perturbations into multiplicative expressions, and such multiplicative estimates are much better suited for iteration. Second, the matrices \(e^{\delta Y}\) and \(e^{\delta X_i}\) are positive definite, so after passing to traces one can use positivity together with the order structure on self-adjoint matrices. Most importantly, the exponential is precisely the device that allows one to cope with noncommutativity: although \(e^{\delta(Y+X_i)}\) does not split into \(e^{\delta Y}e^{\delta X_i}\), a brilliant concavity result of Lieb \cite{lieb1973convex}, \cite[Theorem 8.1.1]{tropp2015introduction} provides us with a straightforward averaging tool:

\begin{prp}[Lieb's concavity theorem]\label{prp:Lieb}
Let \(H\) be a fixed self-adjoint matrix. The map
\[
A\longmapsto \tr \exp\parenth{H+\log A}
\]
is concave on the cone of positive definite matrices \(A\succ 0\).
\end{prp}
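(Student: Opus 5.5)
We would not reprove Lieb's theorem inside the paper, since it is classical and cited from \cite{lieb1973convex} and \cite[Theorem 8.1.1]{tropp2015introduction}. If a self-contained argument were wanted, we would follow Tropp's route. It combines a variational formula with joint convexity of quantum relative entropy: the variational formula writes the trace exponential as a supremum, and concavity then comes from the fact that partially maximizing a jointly concave function gives a concave function.

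\emph{Step 1: variational formula.} For \(Y,A\succ 0\) we use the relative entropy
\[
D(Y;A):=\tr\bigl[Y(\log Y-\log A)-(Y-A)\bigr].
\]
For every self-adjoint \(H\) and every \(A\succ 0\) we would show
\[
\tr\exp\parenth{H+\log A}=\max_{Y\succ 0}\Bigl\{\tr(YH)-D(Y;A)+\tr A\Bigr\}.
\]
Set \(Z:=\exp(H+\log A)\), so that \(H=\log Z-\log A\). Substituting this into the bracket gives \(\tr Z - D(Y;Z)\). By Klein's inequality, which follows from convexity of \(t\mapsto t\log t\), we have \(D(Y;Z)\ge 0\), with equality exactly when \(Y=Z\). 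Hence the maximum equals \(\tr Z\) and is attained at \(Y=Z\).

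\emph{Step 2: joint convexity of \(D\).} This is the main obstacle. The linear part \(\tr(A-Y)\) is harmless, so it suffices to show that \((Y,A)\mapsto \tr[Y\log Y-Y\log A]\) is jointly convex. We would equip \(M_d\) with the Hilbert--Schmidt inner product and use the commuting positive superoperators \(L_Y(B)=YB\) and \(R_A(B)=BA\). Since \(L_Y\) and \(R_A\) commute, functional calculus gives \(\log(L_YR_A^{-1})=\log L_Y-\log R_A\), and
\[
\tr[Y\log Y-Y\log A]=\bigl\langle \id_d,\; R_A\, f\bigl(L_YR_A^{-1}\bigr)\,\id_d\bigr\rangle,
\qquad f(t)=t\log t .
\]
The function \(f\) is operator convex. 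By Effros' theorem, the noncommutative perspective \((L,R)\mapsto R^{1/2}f(R^{-1/2}LR^{-1/2})R^{1/2}\) of an operator convex function is jointly operator convex on commuting pairs, and for commuting pairs it reduces to \(Rf(LR^{-1})\). The maps \(Y\mapsto L_Y\) and \(A\mapsto R_A\) are linear and always produce commuting pairs. Joint convexity of \(D\) then follows by composing with these linear maps and taking the expectation \(\langle \id_d,\cdot\,\id_d\rangle\). We expect the real work to lie in citing or checking Effros' perspective theorem (equivalently, Hansen--Pedersen/Jensen operator inequalities); everything else is bookkeeping.

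\emph{Step 3: conclusion.} By Step 2, the function \((Y,A)\mapsto \tr(YH)-D(Y;A)+\tr A\) is jointly concave on pairs of positive definite matrices. Take \(A_1,A_2\succ 0\), \(t\in[0,1]\), and let \(Y_1,Y_2\) be the maximizers from Step 1. The pair \(tY_1+(1-t)Y_2\) is admissible for \(tA_1+(1-t)A_2\). Joint concavity then gives
\[
\tr\exp\bigl(H+\log(tA_1+(1-t)A_2)\bigr)\ge t\,\tr\exp(H+\log A_1)+(1-t)\,\tr\exp(H+\log A_2),
\]
which is the concavity asserted in \Href{Proposition}{prp:Lieb}.
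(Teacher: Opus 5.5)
Your proposal is correct and matches the paper's treatment: the paper does not prove Lieb's theorem but cites \cite{lieb1973convex} and \cite[Theorem 8.1.1]{tropp2015introduction}, exactly as you propose. Your optional sketch follows Tropp's proof, namely the variational formula with Klein's inequality, then joint convexity of relative entropy via the operator perspective of \(t\log t\) on commuting pairs, then partial maximization. The only difference is that you use left/right multiplication superoperators \(L_Y, R_A\) where Tropp uses a Kronecker-product formulation, which is an equivalent device.
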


\begin{rem}
Alternatively, one can use the Golden--Thompson inequality \cite[Inequality (3.3.3)]{tropp2015introduction}:  
 If \(U\) and \(V\) are self-adjoint matrices, then
\[
\tr e^{U+V}\le \tr\parenth{e^U e^V}.
\]
\end{rem}

This is exactly what makes the trace effective in the noncommutative setting. After applying Lieb's concavity theorem, the one-step estimate reduces to bounding a matrix moment generating function. At that stage the scalar functional
\[
\psi_M(\delta)
=
\frac{e^{\delta M}-1-\delta M}{M^2}
\]
appears explicitly in the inequalities: it is the normalized second-order term in the Taylor expansion of the exponential, and it measures the quadratic remainder after the linear contribution has vanished because of the centering condition.

Finally, since we need to control both positive and negative spectral deviations of \(Y\), it is natural to work with the symmetric potential
\[
\pot{\delta}{Y}:=\tr e^{\delta Y}+\tr e^{-\delta Y}.
\]
The one-step averaging theorem shows that this potential is almost multiplicative under a greedy choice of the next summand. After that, the rest of the proof is a matter of iterating the one-step bound and choosing the parameter \(\delta\) appropriately: in the fixed-\(N\) argument one keeps \(\delta\) constant, whereas in the proof of \Href{Theorem}{thm:all-steps_sparsification} one allows \(\delta\) to vary with the step number.

\section{Proof of the main theorem}
\label{sec:proof-main}

In this section we return to the matrices
\[
A_1,\dots,A_m \succeq 0
\]
from \Href{Theorem}{thm:all-steps_sparsification}, and define the centered matrices
\[
X_i:=A_i-\id_d.
\]
Then
\begin{equation}\label{eq:average-via-sum}
\frac1N\sum_{r \in [N]} A_{i_r}-\id_d
=
\frac1N\sum_{r \in [N]} X_{i_r}.
\end{equation}
Thus it suffices to control averages of the matrices \(X_i\).

We first check that the family \(X_1,\dots,X_m\) satisfies the assumptions of the general averaging theorem with
\[
\Mone=\Mtwo=M.
\]

\begin{lem}\label{lem:centered}
For the matrices \(X_i=A_i-\id_d\), one has
\begin{equation}\label{eq:centered-mean-zero}
\sum_{i \in [m]} \lambda_i X_i=0,
\end{equation}
\begin{equation}\label{eq:Xi-norm}
\norm{X_i}\le M
\qquad\text{for all }i\in[m],
\end{equation}
and
\begin{equation}\label{eq:Xi-square}
\sum_{i \in [m]} \lambda_i X_i^2\preceq M\id_d.
\end{equation}
\end{lem}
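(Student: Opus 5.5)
The three claims are elementary consequences of the hypotheses \eqref{eq:original_sum_matrices}, and I will verify them in order.

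The mean-zero identity \eqref{eq:centered-mean-zero} is immediate from linearity:
\[
\sum_{i\in[m]}\lambda_i X_i=\sum_{i\in[m]}\lambda_i A_i-\Bigl(\sum_{i\in[m]}\lambda_i\Bigr)\id_d=\id_d-\id_d=0 .
\]

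For the norm bound \eqref{eq:Xi-norm}, I will work with spectra. Since \(A_i\succeq 0\) and \(\norm{A_i}\le M\), every eigenvalue of \(A_i\) lies in \([0,M]\). The matrix \(X_i=A_i-\id_d\) is diagonalized in the same basis, so its eigenvalues lie in \([-1,M-1]\), and hence \(\norm{X_i}\le\max\{1,M-1\}\). To conclude \(\norm{X_i}\le M\) I need \(M\ge 1\). This follows from the normalization: taking operator norms in \(\id_d=\sum_i\lambda_iA_i\) gives
\[
1\le\sum_{i\in[m]}\lambda_i\norm{A_i}\le M\sum_{i\in[m]}\lambda_i=M .
\]
Recognizing that this observation about \(M\ge1\) is needed is the only point requiring any care.

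For the quadratic bound \eqref{eq:Xi-square}, I will expand the square and use the centering:
\[
\sum_{i}\lambda_iX_i^2=\sum_i\lambda_i\bigl(A_i^2-2A_i+\id_d\bigr)=\sum_i\lambda_iA_i^2-2\id_d+\id_d=\sum_i\lambda_iA_i^2-\id_d .
\]
Next I apply the PSD factorization highlighted in the motivation: since \(A_i\succeq0\) with spectrum in \([0,M]\), functional calculus gives \(A_i^2\preceq MA_i\), because \(t^2\le Mt\) on \([0,M]\). Summing with the nonnegative weights \(\lambda_i\) yields
\[
\sum_i\lambda_iA_i^2\preceq M\sum_i\lambda_iA_i=M\id_d .
\]
Therefore \(\sum_i\lambda_iX_i^2\preceq(M-1)\id_d\preceq M\id_d\), which is in fact slightly stronger than required.
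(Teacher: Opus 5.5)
Your proposal is correct and follows the paper's proof essentially step by step: linearity for the mean-zero identity, $-\id_d\preceq X_i\preceq (M-1)\id_d$ together with $M\ge 1$ for the norm bound, and expanding the square and using $A_i^2\preceq MA_i$ for the quadratic bound. Your triangle-inequality argument $1\le\sum_i\lambda_i\norm{A_i}\le M$ makes explicit the step $M\ge 1$ that the paper only asserts, and your intermediate bound $(M-1)\id_d$ is exactly what the paper has before it discards the $-\id_d$ term.
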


\begin{proof}
The identity \eqref{eq:centered-mean-zero} follows immediately from \eqref{eq:original_sum_matrices}:
\[
\sum_{i \in [m]} \lambda_i X_i
=
\sum_{i \in [m]} \lambda_i A_i-\id_d
=
0.
\]

Since \(A_i\succeq 0\) and \(\sum_i \lambda_i A_i=\id_d\), one necessarily has \(M\ge 1\). Also,
\[
-\id_d\preceq X_i=A_i-\id_d\preceq (M-1)\id_d,
\]
and therefore \(\norm{X_i}\le M\), which proves \eqref{eq:Xi-norm}.

Next,
\[
X_i^2=(A_i-\id_d)^2=A_i^2-2A_i+\id_d.
\]
Averaging and using \eqref{eq:original_sum_matrices}, we obtain
\[
\sum_{i \in [m]} \lambda_i X_i^2
=
\sum_{i \in [m]} \lambda_i A_i^2-\id_d
\preceq
\sum_{i \in [m]} \lambda_i A_i^2.
\]
Since \(A_i\succeq 0\) and \(\norm{A_i}\le M\),
\[
A_i^2\preceq \norm{A_i}A_i\preceq M A_i.
\]
Therefore,
\[
\sum_{i \in [m]} \lambda_i A_i^2
\preceq
M\sum_{i \in [m]} \lambda_i A_i
=
M\id_d.
\]
Substituting this into the previous inequality yields
\[
\sum_{i \in [m]} \lambda_i X_i^2\preceq M\id_d.
\]
This proves \eqref{eq:Xi-square}.
\end{proof}

\subsection{Sparsification with prescribed number of steps}

To isolate the main idea, we first prove a fixed-\(N\) version of the theorem. The argument already contains the essential greedy step; the only additional ingredient needed for the all-step statement is to vary the parameter \(\delta\) with the step number.

We also state the elementary bound on \(\psi_M\) that will be used throughout this subsection:
\begin{lem}\label{lem:psi-quadratic}
If \(0\le \delta\le 1/M\), then
\[
\psi_M(\delta)\le \delta^2.
\]
\end{lem}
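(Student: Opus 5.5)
We reduce the claim to an elementary scalar inequality by substituting \(x:=\delta M\in[0,1]\). By the definition \eqref{eq:psi},
\[
\psi_M(\delta)=\frac{e^{x}-1-x}{M^2},
\qquad
\delta^2=\frac{x^2}{M^2}.
\]
Hence the statement is equivalent to
\[
e^{x}-1-x\le x^2
\qquad\text{for } x\in[0,1].
\]
The case \(M\) arbitrary positive is covered, since \(M\ge 1\) in the application and only \(M>0\) is needed for the substitution.

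To prove the scalar inequality, expand the exponential in its power series:
\[
e^x-1-x=\sum_{n\ge 2}\frac{x^n}{n!}=x^2\sum_{n\ge 2}\frac{x^{n-2}}{n!}.
\]
For \(0\le x\le 1\) we have \(x^{n-2}\le 1\), so
\[
\sum_{n\ge 2}\frac{x^{n-2}}{n!}\le \sum_{n\ge 2}\frac{1}{n!}=e-2<1.
\]
This gives \(e^x-1-x\le (e-2)x^2\le x^2\). Dividing by \(M^2\) returns \(\psi_M(\delta)\le\delta^2\).

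An alternative route is to note that \(g(x)=x^2-e^x+1+x\) satisfies \(g(0)=0\) and \(g'(x)=2x-e^x+1\). Here \(g'(0)=0\), and \(g'\) is concave with \(g'(1)=3-e>0\), so \(g'\ge 0\) on \([0,1]\).

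There is no real obstacle. The only point requiring care is checking that the constant \(\sum_{n\ge2}1/n!=e-2\) is indeed below \(1\), and that the nonnegativity of every term justifies bounding \(x^{n-2}\) by \(1\) term by term.
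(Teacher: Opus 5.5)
Your proof is correct and follows the same route as the paper: substitute $u=M\delta\in[0,1]$ and reduce to the scalar inequality $e^u-1-u\le u^2$. The paper simply asserts that scalar inequality, while your power-series bound $e^u-1-u\le (e-2)u^2$ (or your alternative argument via the concavity of $g'$) supplies the justification the paper leaves implicit.
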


\begin{proof}
Set \(u:=M\delta\). Then \(0\le u\le 1\), and
\[
\psi_M(\delta)=\frac{e^u-1-u}{M^2}.
\]
For \(0\le u\le 1\), one has
\[
e^u-1-u\le u^2.
\]
Therefore,
\[
\psi_M(\delta)
\le
\frac{u^2}{M^2}
=
\delta^2.
\]
\end{proof}

\begin{thm}\label{thm:main}
Under the assumptions of \Href{Theorem}{thm:all-steps_sparsification}, for every integer \(N\ge 1\), there exists a deterministic sequence of indices
\(
i_1,\dots,i_N\in[m]
\)
such that the following holds.

\begin{enumerate}
\item[(i)] If
\[
N\ge M\ln(2d),
\]
then
\begin{equation}\label{eq:main-largeN}
\norm{
\frac1N\sum_{r \in [N]} A_{i_r}-\id_d
}
\le
2\sqrt{\frac{M\ln(2d)}{N}}.
\end{equation}

\item[(ii)] If
\[
N< M\ln(2d),
\]
then
\begin{equation}\label{eq:main-smallN}
\norm{
\frac1N\sum_{r \in [N]} A_{i_r}-\id_d
}
\le
2\,\frac{M\ln(2d)}{N}.
\end{equation}
\end{enumerate}
\end{thm}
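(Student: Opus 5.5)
We run the greedy procedure on the centered matrices \(X_i=A_i-\id_d\) with a single fixed parameter \(\delta\in(0,1/M]\), to be chosen depending on \(N\). Set \(Y_0:=0\). At step \(r\) we choose \(i_r\) minimizing \(\pot{\delta}{Y_{r-1}+X_i}\) over \(i\in[m]\), and put \(Y_r:=Y_{r-1}+X_{i_r}\). By \Href{Lemma}{lem:centered}, the hypotheses \eqref{eq:mean-zero}, \eqref{eq:norm-bound}, \eqref{eq:square-bound} hold with \(\Mone=\Mtwo=M\). \Href{Theorem}{thm:onestep} then gives \(\pot{\delta}{Y_r}\le e^{M\psi_M(\delta)}\pot{\delta}{Y_{r-1}}\) at each step. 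Since \(\pot{\delta}{0}=2d\), iterating yields
\[
\pot{\delta}{Y_N}\le 2d\,e^{NM\psi_M(\delta)}\le 2d\,e^{NM\delta^2},
\]
where the last inequality uses \Href{Lemma}{lem:psi-quadratic} and \(\delta\le 1/M\). Combining this with \Href{Lemma}{lem:potential-controls} gives \(e^{\delta\norm{Y_N}}\le 2d\,e^{NM\delta^2}\). By \eqref{eq:average-via-sum}, it follows that
\[
\norm{\frac1N\sum_{r\in[N]} A_{i_r}-\id_d}=\frac{\norm{Y_N}}{N}\le \frac{\ln(2d)}{N\delta}+M\delta .
\]

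The remaining work is optimizing \(\delta\) subject to the constraint \(\delta\le 1/M\). The unconstrained minimizer is \(\delta_*=\sqrt{\ln(2d)/(NM)}\).

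In case (i), where \(N\ge M\ln(2d)\), we have \(\delta_*\le 1/M\) exactly when \(NM\ge M^2\ln(2d)\). This amounts to \(N\ge M\ln(2d)\), which is the case assumption, so \(\delta_*\) is admissible. Substituting it, both terms equal \(\sqrt{M\ln(2d)/N}\), giving \eqref{eq:main-largeN}.

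In case (ii), where \(N<M\ln(2d)\), we have \(\delta_*>1/M\), so we take the boundary value \(\delta=1/M\). The bound becomes \(M\ln(2d)/N+1\). Since \(1<M\ln(2d)/N\) in this regime, this is at most \(2M\ln(2d)/N\), which is \eqref{eq:main-smallN}.

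The argument is essentially bookkeeping once \Href{Theorem}{thm:onestep} is available. The only delicate point is the constraint \(\delta\le 1/M\) imposed by \Href{Lemma}{lem:psi-quadratic}: it is exactly what separates the two regimes. In the coarse regime we must check that the additive \(+1\) coming from \(M\delta\) is absorbed, which holds precisely because \(N<M\ln(2d)\).
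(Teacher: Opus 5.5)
Your proposal is correct and matches the paper's proof essentially step for step. Both use the same greedy construction with a fixed \(\delta=\min\{1/M,\sqrt{\ln(2d)/(MN)}\}\) and iterate the one-step bound from \(F_\delta(0)=2d\), using \(\psi_M(\delta)\le\delta^2\) and \(e^{\delta\|Y\|}\le F_\delta(Y)\). The case split is also the same, with the additive \(+1\) absorbed when \(N<M\ln(2d)\).
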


\begin{proof}
By \Href{Lemma}{lem:centered}, the matrices \(X_1,\dots,X_m\) satisfy the assumptions of \Href{Theorem}{thm:onestep} with
\[
\Mone=\Mtwo=M.
\]

Fix \(N\ge 1\) and choose a parameter \(\delta>0\), to be specified later. We construct the sequence \(i_1,\dots,i_N\) recursively.

Set
\[
Y_0:=0.
\]
Suppose \(Y_k\) and \(i_1,\dots,i_k\) have already been chosen. By \Href{Theorem}{thm:onestep}, there exists an index \(i_{k+1}\in[m]\) such that
\[
\pot{\delta}{Y_k+X_{i_{k+1}}}
\le
 e^{M\psi_M(\delta)}\pot{\delta}{Y_k}.
\]
We choose such an index and define
\[
Y_{k+1}:=Y_k+X_{i_{k+1}}.
\]

By construction,
\[
\pot{\delta}{Y_{k+1}}
\le
 e^{M\psi_M(\delta)}\pot{\delta}{Y_k}
\qquad\text{for every }k=0,1,\dots,N-1.
\]
Iterating, we obtain
\[
\pot{\delta}{Y_N}
\le
 e^{NM\psi_M(\delta)}\pot{\delta}{Y_0}.
\]
Since \(Y_0=0\),
\[
\pot{\delta}{Y_0}=\tr \id_d+\tr \id_d=2d.
\]
Therefore,
\begin{equation}\label{eq:potential-final}
\pot{\delta}{Y_N}\le 2d\,e^{NM\psi_M(\delta)}.
\end{equation}

By \Href{Lemma}{lem:potential-controls},
\[
e^{\delta\norm{Y_N}}
\le
\pot{\delta}{Y_N}.
\]
Combining this with \eqref{eq:potential-final}, we get
\[
e^{\delta\norm{Y_N}}
\le
2d\,e^{NM\psi_M(\delta)}.
\]
Taking logarithms gives
\begin{equation}\label{eq:Yn-delta}
\norm{Y_N}
\le
\frac{\ln(2d)}{\delta}
+
\frac{NM\psi_M(\delta)}{\delta}.
\end{equation}

Set
\[
L:=\ln(2d).
\]
Choose
\[
\delta:=\min \braces{\frac1M,\sqrt{\frac{L}{MN}}}.
\]
Since \(\delta\le 1/M\), \Href{Lemma}{lem:psi-quadratic} gives
\[
M\psi_M(\delta)\le M\delta^2.
\]
Therefore \eqref{eq:Yn-delta} yields
\[
\norm{Y_N}
\le
\frac{L}{\delta}+NM\delta.
\]

We distinguish two cases.

\medskip
\noindent
\textbf{Case 1: \(N\ge ML\).}
Then
\[
\delta=\sqrt{\frac{L}{MN}}.
\]
Consequently,
\[
\norm{Y_N}
\le
2\sqrt{MNL}.
\]
Hence
\[
\norm{\frac{1}{N} Y_N}
\le
2\sqrt{\frac{ML}{N}}.
\]
By \eqref{eq:average-via-sum}, this proves \eqref{eq:main-largeN}.

\medskip
\noindent
\textbf{Case 2: \(N<ML\).}

Then
\[
\delta=\frac1M.
\]
Therefore,
\[
\norm{Y_N}
\le
ML+N.
\]
Dividing by \(N\), we obtain
\[
\norm{\frac{1}{N} Y_N}
\le
\frac{ML}{N}+1.
\]
Since in this case \(N<ML\), we have
\[
\frac{ML}{N}>1.
\]
Thus,
\[
\norm{\frac{1}{N} Y_N}
\le
2\,\frac{ML}{N}.
\]
By \eqref{eq:average-via-sum}, this proves \eqref{eq:main-smallN}.
\end{proof}

\subsection{A sequence with control at every step}

In this subsection we prove \Href{Theorem}{thm:all-steps_sparsification}. The argument begins exactly as in the proof above. Up to the threshold \(k\le M\ln(2d)\) we keep the parameter \(\delta\) fixed, and beyond that point we let it decrease with \(k\). The only new ingredient is an interpolation estimate that allows us to compare the potential at two nearby values of \(\delta\):

\begin{lem}\label{lem:delta-interpolation}
Let \(Y\) be a self-adjoint matrix, and let \(0\le \eta\le \delta\). Then
\[
\pot{\eta}{Y}
\le
(2d)^{1-\eta/\delta}\pot{\delta}{Y}^{\eta/\delta}.
\]
\end{lem}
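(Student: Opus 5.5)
The plan is to reduce the matrix statement to a scalar inequality on the eigenvalues of \(Y\) and then apply Hölder's inequality, or equivalently Jensen's inequality for the concave power function. Let \(\mu_1,\dots,\mu_d\) be the eigenvalues of \(Y\). Since \(\tr e^{\pm\delta Y}=\sum_j e^{\pm\delta\mu_j}\), the potential is a sum of \(2d\) positive numbers:
\[
\pot{\delta}{Y}=\sum_{s\in\{\pm1\}}\sum_{j\in[d]} e^{s\delta\mu_j},
\qquad
\pot{\eta}{Y}=\sum_{s\in\{\pm1\}}\sum_{j\in[d]} \bigl(e^{s\delta\mu_j}\bigr)^{\eta/\delta}.
\]

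Set \(\theta:=\eta/\delta\in[0,1]\), assuming \(\delta>0\); the case \(\eta=0\) is trivial, since then both sides equal \(2d\). Write \(a_1,\dots,a_{2d}>0\) for the \(2d\) numbers \(e^{s\delta\mu_j}\). The claim then becomes \(\sum_\ell a_\ell^\theta\le (2d)^{1-\theta}\bigl(\sum_\ell a_\ell\bigr)^\theta\).

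This is exactly Hölder's inequality with exponents \(1/\theta\) and \(1/(1-\theta)\), applied to the product \(a_\ell^\theta\cdot 1\). Alternatively, it follows from concavity of \(t\mapsto t^\theta\): by Jensen,
\[
\frac{1}{2d}\sum_\ell a_\ell^\theta\le\Bigl(\frac1{2d}\sum_\ell a_\ell\Bigr)^\theta.
\]
Multiplying by \(2d\) gives the stated bound.

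There is no real obstacle. The only points needing care are that the exponential of a self-adjoint matrix has trace equal to the sum of the exponentials of its eigenvalues, which holds by the spectral theorem, and the degenerate endpoints \(\theta=0\) and \(\theta=1\), where the inequality holds with equality.
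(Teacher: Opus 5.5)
Your proposal is correct and is essentially the paper's argument. The paper applies trace H\"older separately to $\tr e^{\delta Y}$ and $\tr e^{-\delta Y}$, which gives the factor $d^{1-t}$ each. It then merges the two terms via $a^t+b^t\le 2^{1-t}(a+b)^t$. You instead pool all $2d$ numbers $e^{\pm\delta\mu_j}$ and apply H\"older/Jensen once, which is the same inequality done in a single step.
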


\begin{proof}
Set \(t:=\eta/\delta\in[0,1]\). By H\"older's inequality for the trace,
\[
\tr e^{\eta Y}=\tr\parenth{(e^{\delta Y})^t\id_d^{1-t}}
\le \parenth{\tr e^{\delta Y}}^t \parenth{\tr \id_d}^{1-t}
= d^{1-t}\parenth{\tr e^{\delta Y}}^t.
\]
The same bound holds with \(Y\) replaced by \(-Y\). Summing the two inequalities and using
\[
a^t+b^t\le 2^{1-t}(a+b)^t
\qquad \text{for all } a,b\ge 0,
\]
we obtain
\[
\pot{\eta}{Y}
\le d^{1-t}\parenth{\parenth{\tr e^{\delta Y}}^t+\parenth{\tr e^{-\delta Y}}^t}
\le (2d)^{1-t}\pot{\delta}{Y}^t.
\]
Since \(t=\eta/\delta\), this is exactly the claimed estimate.
\end{proof}

\begin{proof}[Proof of \Href{Theorem}{thm:all-steps_sparsification}]
Let
\[
X_i:=A_i-\id_d.
\]
By \Href{Lemma}{lem:centered},
\[
\sum_{i \in [m]} \lambda_i X_i=0,
\qquad
\norm{X_i}\le M,
\qquad
\sum_{i \in [m]} \lambda_i X_i^2\preceq M\id_d.
\]

Set
\[
L:=\ln(2d),
\qquad
q:=\floor{ML}+1.
\]
Thus, for integers \(k\ge 1\),
\[
k<q \iff k\le ML,
\qquad
k\ge q \iff k>ML.
\]
For every \(k\ge 1\), define
\[
\delta_k:=\min\braces{\frac1M,\sqrt{\frac{L}{Mk}}}.
\]
Equivalently,
\[
\delta_k=\frac1M
\qquad\text{for }k<q,
\]
and
\[
\delta_k=\sqrt{\frac{L}{Mk}}
\qquad\text{for }k\ge q.
\]

We construct the sequence recursively. Set
\[
Y_0:=0.
\]
Suppose \(Y_{k-1}\) and \(i_1,\dots,i_{k-1}\) have already been chosen. By \Href{Theorem}{thm:onestep}, applied with the parameter \(\delta_k\), there exists an index \(i_k\in[m]\) such that
\[
\pot{\delta_k}{Y_{k-1}+X_{i_k}}
\le
 e^{a_k}\pot{\delta_k}{Y_{k-1}},
\qquad
a_k:=M\psi_M(\delta_k).
\]
We choose such an index and define
\[
Y_k:=Y_{k-1}+X_{i_k}.
\]
Then
\begin{equation}\label{eq:stepwise-potential}
\pot{\delta_k}{Y_k}
\le
 e^{a_k}\pot{\delta_k}{Y_{k-1}}
\qquad\text{for all }k\ge 1.
\end{equation}

We now estimate the partial sums \(Y_k\).

\medskip
\noindent
\textbf{Step 1: the range \(k<q\), equivalently \(k\le ML\).}

In this range the parameter is constant: \(\delta_j=1/M\) for every \(j\le k\). Thus the argument is exactly the same as in the proof of \Href{Theorem}{thm:main}. Since \(\delta_j=1/M\), \Href{Lemma}{lem:psi-quadratic} gives
\[
a_j=M\psi_M(1/M)\le \frac1M
\qquad\text{for every }j\le k.
\]
Iterating \eqref{eq:stepwise-potential}, we obtain
\[
\pot{1/M}{Y_k}
\le
2d\,e^{k/M}.
\]
By \Href{Lemma}{lem:potential-controls},
\[
e^{\norm{Y_k}/M}\le \pot{1/M}{Y_k},
\]
and therefore
\[
\norm{Y_k}
\le
ML+k.
\]
Dividing by \(k\), we get
\[
\norm{\frac1k Y_k}
\le
\frac{ML}{k}+1.
\]
Since \(k\le ML\), one has \(ML/k\ge 1\), and hence
\[
\norm{\frac1k Y_k}
\le
2\,\frac{ML}{k}.
\]
Using \eqref{eq:average-via-sum}, we conclude that
\[
\norm{
\frac1k\sum_{r \in [k]} A_{i_r}-\id_d
}
\le
2\,\frac{M\ln(2d)}{k}
\qquad\text{for }k<q.
\]

\medskip
\noindent
\textbf{Step 2: the range \(k\ge q\), equivalently \(k>ML\).}

Set
\[
B_k:=\pot{\delta_k}{Y_k},
\qquad
b_k:=\ln B_k,
\qquad
c_k:=b_k-L.
\]

We first obtain the initial bound
\begin{equation}\label{eq:cq-bound}
c_q\le 2L.
\end{equation}
If \(q=1\), then \(ML<1\), so \(\delta_1=\sqrt{L/M}\le 1/M\). Hence \Href{Lemma}{lem:psi-quadratic} gives
\[
a_1=M\psi_M(\delta_1)
\le
M\delta_1^2
=
L.
\]
Since \(Y_0=0\), \eqref{eq:stepwise-potential} gives
\[
B_1
\le
 e^{a_1}\pot{\delta_1}{0}
=
2d\,e^{a_1},
\]
so \(c_1\le L\), which proves \eqref{eq:cq-bound} in this case.

Assume now that \(q\ge 2\). So, Step 1 applies at time \(q-1\). In particular,
\[
\pot{1/M}{Y_{q-1}}
\le
2d\,e^{(q-1)/M},
\]
and therefore
\[
c_{q-1}=\ln \pot{1/M}{Y_{q-1}}-L
\le
\frac{q-1}{M}
\le
L.
\]
Since \(q>ML\), we have \(\delta_q=\sqrt{L/(Mq)}\le 1/M\). Applying \Href{Lemma}{lem:delta-interpolation} with
\[
\alpha_q:=\frac{\delta_q}{\delta_{q-1}}=M\delta_q\in(0,1],
\]
and then using \eqref{eq:stepwise-potential}, we obtain
\[
B_q
\le
 e^{a_q}(2d)^{1-\alpha_q}B_{q-1}^{\alpha_q}.
\]
Taking logarithms gives
\[
c_q\le a_q+\alpha_q c_{q-1}.
\]
Since \(\delta_q\le 1/M\), \Href{Lemma}{lem:psi-quadratic} gives
\[
a_q=M\psi_M(\delta_q)
\le
M\delta_q^2
=
\frac{L}{q}
\le L.
\]
Since \(c_{q-1}\le L\), we obtain \(c_q\le 2L\), proving \eqref{eq:cq-bound}.

We next claim that
\begin{equation}\label{eq:ck-bound}
c_k\le 2L
\qquad\text{for all }k\ge q.
\end{equation}
We have already established the base case \(k=q\). Now let \(k\ge q+1\), and assume that \(c_{k-1}\le 2L\). Since both \(k-1\) and \(k\) belong to the second range,
\[
\delta_{k-1}=\sqrt{\frac{L}{M(k-1)}},
\qquad
\delta_k=\sqrt{\frac{L}{Mk}},
\qquad
\alpha_k:=\frac{\delta_k}{\delta_{k-1}}=\sqrt{\frac{k-1}{k}}.
\]
Applying \Href{Lemma}{lem:delta-interpolation} and then \eqref{eq:stepwise-potential}, we get
\[
B_k
\le
 e^{a_k}(2d)^{1-\alpha_k}B_{k-1}^{\alpha_k},
\]
so
\[
c_k\le a_k+\alpha_k c_{k-1}.
\]
Again \(\delta_k\le 1/M\), and therefore \Href{Lemma}{lem:psi-quadratic} gives
\[
a_k=M\psi_M(\delta_k)
\le
M\delta_k^2
=
\frac{L}{k}.
\]
Using the inductive hypothesis, we obtain
\[
c_k
\le
\frac{L}{k}+2L\sqrt{\frac{k-1}{k}}.
\]
Since
\[
\frac1k+2\sqrt{\frac{k-1}{k}}\le 2,
\]
it follows that \(c_k\le 2L\). This proves \eqref{eq:ck-bound}.

Thus, for every \(k\ge q\),
\[
b_k=L+c_k\le 3L.
\]
Using \Href{Lemma}{lem:potential-controls}, we obtain
\[
e^{\delta_k\norm{Y_k}}\le \pot{\delta_k}{Y_k}=B_k,
\]
hence
\[
\norm{Y_k}\le \frac{b_k}{\delta_k}\le \frac{3L}{\delta_k}.
\]
Since \(k\ge q\), we have \(\delta_k=\sqrt{L/(Mk)}\), and therefore
\[
\norm{Y_k}\le 3\sqrt{MLk}.
\]
Dividing by \(k\), we get
\[
\norm{\frac1k Y_k}\le 3\sqrt{\frac{ML}{k}}.
\]
Using again \eqref{eq:average-via-sum}, we arrive at
\[
\norm{
\frac1k\sum_{r \in [k]} A_{i_r}-\id_d
}
\le
3\sqrt{\frac{M\ln(2d)}{k}}
\qquad\text{for }k\ge q.
\]

Combining the two regimes proves the theorem.
\end{proof}

\section{Proof of the one-step averaging theorem}
\label{sec:ave_thm}

In this section we prove \Href{Theorem}{thm:onestep}. We begin by collecting the auxiliary estimates that enter the argument.

\subsection{Auxiliary estimates}

We will use two standard matrix inequalities.

\begin{prp}[Trace comparison]\label{prp:trace-comparison}
Let \(A,B\succeq 0\). If
\[
B\preceq c\,\id_d
\]
for some \(c\ge 0\), then
\[
\tr(AB)\le c\,\tr(A).
\]
\end{prp}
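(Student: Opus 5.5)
The trace comparison follows directly from positivity and the cyclic property of the trace. Since \(A\succeq 0\), it has a positive semidefinite square root \(A^{1/2}\). By cyclicity,
\[
\tr(AB)=\tr\parenth{A^{1/2}BA^{1/2}}.
\]
The plan is to apply the congruence \(Z\mapsto A^{1/2}ZA^{1/2}\) to the hypothesis and then take traces.

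\textbf{Step 1: congruence.} The hypothesis \(B\preceq c\,\id_d\) says that \(c\,\id_d-B\succeq 0\). Congruence preserves the Löwner order, because for every vector \(x\),
\[
\langle A^{1/2}(c\,\id_d-B)A^{1/2}x,x\rangle=\langle (c\,\id_d-B)y,y\rangle\ge 0,
\qquad y=A^{1/2}x.
\]
Hence
\[
A^{1/2}BA^{1/2}\preceq c\,A^{1/2}A^{1/2}=c\,A.
\]

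\textbf{Step 2: traces.} The trace is monotone with respect to \(\preceq\), since a positive semidefinite matrix has nonnegative diagonal entries and therefore nonnegative trace. Taking traces in Step 1 gives
\[
\tr(AB)=\tr\parenth{A^{1/2}BA^{1/2}}\le c\,\tr(A).
\]

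There is no real obstacle here. The only points to state explicitly are the use of the square root \(A^{1/2}\), which makes the middle matrix symmetric so the Löwner order applies, and the monotonicity of the trace. The hypothesis \(B\succeq 0\) is not actually needed for this argument.
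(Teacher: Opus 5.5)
Your proof is correct and complete: the reduction $\tr(AB)=\tr(A^{1/2}BA^{1/2})$, the congruence $A^{1/2}BA^{1/2}\preceq c\,A$, and the monotonicity of the trace are exactly the standard argument. The paper omits this proof as standard, so there is nothing to compare against. Your remark that $B\succeq 0$ is superfluous is right; the same holds for $c\ge 0$, since only self-adjointness of $B$ is used.
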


\begin{prp}\label{prp:spectral-transfer}
Let \(S=S^*\) be a self-adjoint matrix, and let \(f,g:\R\to\R\) be analytic functions such that
\[
f(x)\le g(x)
\qquad\text{for every }x\in \sigma(S),
\]
where \(\sigma(S)\) denotes the spectrum of \(S\). Then
\[
f(S)\preceq g(S).
\]
\end{prp}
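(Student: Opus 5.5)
We prove the claim by diagonalizing \(S\) and using the fact that an analytic function of a self-adjoint matrix acts eigenvalue by eigenvalue. By the spectral theorem, write \(S=U\Lambda U^*\), where \(U\) is unitary and \(\Lambda=\mathrm{diag}(s_1,\dots,s_d)\) lists the eigenvalues \(s_j\in\sigma(S)\). The first step is to identify \(f(S)\) with \(U f(\Lambda) U^*\), where \(f(\Lambda)=\mathrm{diag}(f(s_1),\dots,f(s_d))\).

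To justify this identification, note that \(S^n=U\Lambda^nU^*\) for every \(n\). Hence for any polynomial \(p\) we have \(p(S)=U\,p(\Lambda)\,U^*\). An analytic function given by an everywhere convergent power series, such as the exponentials used in this paper, is a limit of its partial sums. The partial sums converge entrywise on the diagonal matrix \(\Lambda\), so the identity passes to the limit and gives \(f(S)=U f(\Lambda)U^*\). If \(f(S)\) is understood through the holomorphic functional calculus, or simply defined spectrally, the same identity holds. In every case \(f(S)\) depends only on the values of \(f\) on \(\sigma(S)\). The same argument applies to \(g\).

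Next consider the difference
\[
g(S)-f(S)=U\,\mathrm{diag}\bigl(g(s_1)-f(s_1),\dots,g(s_d)-f(s_d)\bigr)\,U^*.
\]
By hypothesis each diagonal entry \(g(s_j)-f(s_j)\) is nonnegative. For any vector \(v\), set \(w=U^*v\). Then
\[
\langle (g(S)-f(S))v,v\rangle=\sum_{j\in[d]}\bigl(g(s_j)-f(s_j)\bigr)|w_j|^2\ge 0,
\]
so \(g(S)-f(S)\succeq 0\), which is exactly the claim \(f(S)\preceq g(S)\).

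The only point needing care is the meaning of \(f(S)\) for a general real-analytic \(f\). A real-analytic function need not have a globally convergent power series, so the limiting argument above does not cover every \(f\) directly. The cleanest fix is to define \(f(S)\) by the spectral formula \(U f(\Lambda)U^*\). This definition is consistent with power series whenever those converge on a neighbourhood of \(\sigma(S)\), and that includes every function to which the proposition is applied later in the paper.
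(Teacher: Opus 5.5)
Your proof is correct. Diagonalizing $S=U\Lambda U^*$, identifying $f(S)=Uf(\Lambda)U^*$, and reading off $g(S)-f(S)\succeq 0$ from the nonnegative diagonal entries is exactly the standard argument the paper has in mind when it omits the proof. Your remark on defining $f(S)$ spectrally for general real-analytic $f$ is also sound, and the only application, in Lemma~\ref{lem:mgf}, involves the entire functions $e^{\delta x}$ and $1+\delta x+\psi_{M_1}(\delta)x^2$, where your power-series argument applies directly.
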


Both statements are standard, so we omit their proofs.

The next lemma isolates the scalar inequality behind the potential. Combined with \Href{Proposition}{prp:spectral-transfer}, it allows us to compare the matrix exponential with a quadratic polynomial in the same matrix. This is exactly the step that makes it possible to use \eqref{eq:mean-zero} to cancel the linear term and \eqref{eq:square-bound} to control the quadratic term.

\begin{lem}\label{lem:scalar}
For every real number \(x\le \Mone\),
\begin{equation}\label{eq:scalar-bound}
e^{\delta x}\le 1+\delta x+\psi_{\Mone}(\delta)x^2.
\end{equation}
\end{lem}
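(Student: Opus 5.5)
The plan is to reduce the claim to a monotonicity property of the scalar function
\[
h(y):=\frac{e^{y}-1-y}{y^2}\quad (y\neq 0),\qquad h(0):=\tfrac12,
\]
which extends to an analytic, in particular continuous, function on all of \(\R\). For \(x\neq 0\) the inequality \eqref{eq:scalar-bound} can be rewritten as
\[
\frac{e^{\delta x}-1-\delta x}{x^2}\le \psi_{\Mone}(\delta)=\frac{e^{\delta\Mone}-1-\delta\Mone}{\Mone^2}.
\]
Multiplying both sides by \(1/\delta^2>0\), this becomes \(h(\delta x)\le h(\delta \Mone)\). For \(x=0\) the inequality \eqref{eq:scalar-bound} reads \(1\le 1\) and holds trivially. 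Since \(\delta>0\) and \(x\le \Mone\), we have \(\delta x\le\delta\Mone\). It therefore suffices to show that \(h\) is nondecreasing on \(\R\).

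For the monotonicity I would use the integral representation coming from Taylor's formula with integral remainder:
\[
e^{y}-1-y=y^2\int_0^1(1-s)e^{sy}\,ds,
\qquad\text{so}\qquad
h(y)=\int_0^1(1-s)e^{sy}\,ds .
\]
This identity holds for every \(y\in\R\), including \(y=0\), where it gives \(\tfrac12\). For each fixed \(s\in[0,1]\) the integrand \((1-s)e^{sy}\) is nonnegative and nondecreasing in \(y\). Hence \(h\) is nondecreasing, and this gives the lemma immediately.

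The main point needing care is that \(x\) may be negative and arbitrarily large in absolute value, with no lower bound assumed. Any argument based on a power series comparison, such as bounding \(\sum_{k\ge2}\delta^k x^{k-2}/k!\) term by term by \(\sum_{k\ge 2}\delta^k\Mone^{k-2}/k!\), works only for \(0\le x\le \Mone\). It fails for negative \(x\), because the odd powers change sign. The integral representation avoids this problem, because it gives monotonicity on the whole real line at once. The only remaining subtlety is the removable singularity at \(y=0\), and the integral formula handles it automatically.

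Alternatively, one could verify \(h'(y)\ge0\) directly. This reduces to the inequality \((y-2)e^{y}+y+2\ge 0\), which can be checked by differentiating twice. I would present the integral argument because it is shorter.
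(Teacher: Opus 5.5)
Your proposal is correct and follows essentially the same route as the paper: both reduce the claim to monotonicity of $(e^{\delta x}-1-\delta x)/x^2$ via the integral remainder $\int_0^1(1-s)e^{su}\di s$, and then compare with the value at $x=\Mone$. The only difference is cosmetic. You rescale to $h(y)$ with $y=\delta x$, whereas the paper keeps $\delta$ inside $f(x)=\delta^2\int_0^1(1-s)e^{\delta s x}\di s$.
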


\begin{proof}
Define
\[
f(x):=
\begin{cases}
\dfrac{e^{\delta x}-1-\delta x}{x^2}, & x\neq 0,\\[1.5ex]
\dfrac{\delta^2}{2}, & x=0.
\end{cases}
\]
Using the identity
\[
e^u-1-u=u^2\int_0^1 (1-s)e^{su}\,\di s,
\]
we obtain
\[
f(x)=\delta^2\int_0^1 (1-s)e^{\delta s x}\,\di s.
\]
This representation shows that \(f\) is increasing in \(x\). Therefore, for every \(x\le \Mone\),
\[
f(x)\le f(\Mone)=\psi_{\Mone}(\delta).
\]
Multiplying by \(x^2\), we get
\[
e^{\delta x}-1-\delta x\le \psi_{\Mone}(\delta)x^2,
\]
which is exactly \eqref{eq:scalar-bound}.
\end{proof}

We now transfer this estimate to matrices.

\subsection{Proof of the one-step theorem}

We first state the matrix exponential bounds that will be used in the proof.

\begin{lem}\label{lem:mgf}
Under assumptions \eqref{eq:mean-zero}, \eqref{eq:norm-bound}, and \eqref{eq:square-bound}, one has
\begin{equation}\label{eq:mgf-plus}
\sum_{i \in [m]} \lambda_i e^{\delta X_i}\preceq e^{\Mtwo\psi_{\Mone}(\delta)}\id_d,
\end{equation}
and
\begin{equation}\label{eq:mgf-minus}
\sum_{i \in [m]} \lambda_i e^{-\delta X_i}\preceq e^{\Mtwo\psi_{\Mone}(\delta)}\id_d.
\end{equation}
\end{lem}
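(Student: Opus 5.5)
The plan is to apply the scalar bound of \Href{Lemma}{lem:scalar} spectrally to each \(X_i\) and then average. Each \(X_i\) is self-adjoint with \(\sigma(X_i)\subset[-\Mone,\Mone]\) by \eqref{eq:norm-bound}, so every eigenvalue \(x\) satisfies \(x\le \Mone\). Lemma~\ref{lem:scalar} then gives \(e^{\delta x}\le 1+\delta x+\psi_{\Mone}(\delta)x^2\) on \(\sigma(X_i)\). Both sides are analytic (entire) functions of \(x\), so \Href{Proposition}{prp:spectral-transfer} yields
\[
e^{\delta X_i}\preceq \id_d+\delta X_i+\psi_{\Mone}(\delta)X_i^2 .
\]

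Next I average with the weights \(\lambda_i\ge 0\); since the Löwner order is preserved under nonnegative combinations, this is legitimate. The linear term vanishes by \eqref{eq:mean-zero}, and the quadratic term is bounded by \eqref{eq:square-bound}, using \(\psi_{\Mone}(\delta)\ge 0\), which holds because \(e^u\ge 1+u\). Together with \(\sum_i\lambda_i=1\) this gives
\[
\sum_{i\in[m]}\lambda_i e^{\delta X_i}\preceq \bigl(1+\Mtwo\psi_{\Mone}(\delta)\bigr)\id_d\preceq e^{\Mtwo\psi_{\Mone}(\delta)}\id_d,
\]
where the last step again uses \(1+u\le e^u\). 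This proves \eqref{eq:mgf-plus}.

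For \eqref{eq:mgf-minus}, I apply the same argument to the family \(-X_i\), which satisfies the same three hypotheses: the mean is still zero, \(\norm{-X_i}=\norm{X_i}\le\Mone\), and \((-X_i)^2=X_i^2\). Equivalently, the eigenvalues of \(-X_i\) are also at most \(\Mone\) in absolute value, so Lemma~\ref{lem:scalar} applies to them unchanged.

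There is no real obstacle. The only points to check are that the spectral bound \(x\le\Mone\) follows from the operator-norm bound, and that \(\psi_{\Mone}(\delta)\ge0\), so that multiplying \eqref{eq:square-bound} by it preserves the Löwner order.
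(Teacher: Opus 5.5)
Your proposal is correct and follows the paper's proof essentially verbatim. Both arguments transfer the scalar bound of Lemma~\ref{lem:scalar} spectrally, average so that the linear term cancels and the quadratic term is controlled by \eqref{eq:square-bound}, finish with $1+u\le e^u$, and obtain the second inequality from the family $-X_i$. Your explicit remark that $\psi_{\Mone}(\delta)\ge 0$ is needed before multiplying \eqref{eq:square-bound} by it is a step the paper uses without stating.
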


\begin{proof}
Fix \(i\in[m]\). Since \(X_i\) is self-adjoint and \(\norm{X_i}\le \Mone\), every eigenvalue \(\mu\) of \(X_i\) satisfies
\[
\mu\le \Mone.
\]
Applying \Href{Proposition}{prp:spectral-transfer} to \(X_i\) and using \Href{Lemma}{lem:scalar}, we obtain
\[
e^{\delta X_i}\preceq \id_d+\delta X_i+\psi_{\Mone}(\delta)X_i^2.
\]
Multiplying by \(\lambda_i\) and summing over \(i\), we get
\[
\sum_{i \in [m]} \lambda_i e^{\delta X_i}
\preceq
\sum_{i \in [m]} \lambda_i \id_d
+
\delta\sum_{i \in [m]} \lambda_i X_i
+
\psi_{\Mone}(\delta)\sum_{i \in [m]} \lambda_i X_i^2.
\]
Using \eqref{eq:mean-zero}, \(\sum_i \lambda_i=1\), and \eqref{eq:square-bound}, we obtain
\[
\sum_{i \in [m]} \lambda_i e^{\delta X_i}
\preceq
\parenth{1+\Mtwo\psi_{\Mone}(\delta)}\id_d
\preceq
e^{\Mtwo\psi_{\Mone}(\delta)}\id_d.
\]
This proves \eqref{eq:mgf-plus}.

Applying \eqref{eq:mgf-plus} to the family \(-X_1,\dots,-X_m\), which satisfies the same assumptions, yields
\[
\sum_{i \in [m]} \lambda_i e^{-\delta X_i}
=
\sum_{i \in [m]} \lambda_i e^{\delta(-X_i)}
\preceq
e^{\Mtwo\psi_{\Mone}(\delta)}\id_d.
\]
This proves \eqref{eq:mgf-minus}.
\end{proof}

We now combine the matrix exponential bounds from \Href{Lemma}{lem:mgf} with Lieb's concavity theorem.

\begin{proof}[Proof of \Href{Theorem}{thm:onestep}]

We first treat the positive exponential part.  Applying \Href{Proposition}{prp:Lieb} with
\(H=\delta Y\) and \(A_i=e^{\delta X_i}\), we obtain
\[
\sum_{i\in[m]}\lambda_i\tr e^{\delta Y+\delta X_i}
=
\sum_{i\in[m]}\lambda_i\tr\exp\parenth{\delta Y+
\log e^{\delta X_i}}
\le
\tr\exp\parenth{\delta Y+
\log\parenth{\sum_{i\in[m]}\lambda_i e^{\delta X_i}}}.
\]
By \eqref{eq:mgf-plus},
\[
\sum_{i\in[m]}\lambda_i e^{\delta X_i}
\preceq
 e^{\Mtwo\psi_{\Mone}(\delta)}\id_d.
\]
Using the operator monotonicity of the logarithm and the monotonicity of
\(A\mapsto\tr e^A\) on self-adjoint matrices, we get
\[
\tr\exp\parenth{\delta Y+
\log\parenth{\sum_{i\in[m]}\lambda_i e^{\delta X_i}}}
\le
\tr\exp\parenth{\delta Y+
\Mtwo\psi_{\Mone}(\delta)\id_d}
=
 e^{\Mtwo\psi_{\Mone}(\delta)}\tr e^{\delta Y}.
\]
Thus
\begin{equation}\label{eq:one-step-plus-lieb}
\sum_{i\in[m]}\lambda_i\tr e^{\delta(Y+X_i)}
\le
 e^{\Mtwo\psi_{\Mone}(\delta)}\tr e^{\delta Y}.
\end{equation}

The negative exponential part is identical.  Applying Lieb's theorem with
\(H=-\delta Y\) and \(A_i=e^{-\delta X_i}\), and then using
\eqref{eq:mgf-minus}, gives
\begin{equation}\label{eq:one-step-minus-lieb}
\sum_{i\in[m]}\lambda_i\tr e^{-\delta(Y+X_i)}
\le
 e^{\Mtwo\psi_{\Mone}(\delta)}\tr e^{-\delta Y}.
\end{equation}
Adding \eqref{eq:one-step-plus-lieb} and \eqref{eq:one-step-minus-lieb}, we obtain
\[
\sum_{i\in[m]}\lambda_i\pot{\delta}{Y+X_i}
\le
 e^{\Mtwo\psi_{\Mone}(\delta)}\pot{\delta}{Y},
\]
which is \eqref{eq:one-step-average}.  The existence of an index satisfying
\eqref{eq:one-step-choice} follows again because the left-hand side is a
convex combination of the values \(\pot{\delta}{Y+X_i}\).
\end{proof}

\bibliographystyle{alpha}
\bibliography{../work_current/uvolit}

\end{document}